\newtheorem{theorem}{Theorem}
\newtheorem{lemma}{Lemma}
\newtheorem{remark}{Remark}
\theoremstyle{definition}
\newtheorem{definition}{Definition}
\newtheorem{problem}{Problem}
\newtheorem{assumption}{Assumption}
\title{\LARGE \bf
Feasibility-Guaranteed Safety-Critical Control with Applications to\\ Heterogeneous Platoons}
\author{Shuo Liu$^{1}$, Wei Xiao$^{2}$ and Calin Belta$^{3}$
\thanks{This work was supported in part by the NSF under grant IIS-2024606.}
\thanks{$^{1}$S. Liu is with the department of Mechanical Engineering, Boston
University, Brookline, MA, 02215, USA. 
        {\tt\small \{liushuo\}@bu.edu}}%
\thanks{$^{2}$W. Xiao is with the Computer Science and Artificial Intelligence Lab, Massachusetts Institute of Technology, Cambridge, MA, USA 
        {\tt\small weixy@mit.edu}}%
\thanks{$^{3}$C. Belta is with the Department of Electrical and Computer Engineering and the Department of Computer Science, University of Maryland, College Park, MD, USA  
        {\tt\small calin@umd.edu}}
}
\begin{document} 
\maketitle

\begin{abstract}
This paper studies safety and feasibility guarantees for systems with tight control bounds. It has been shown that stabilizing an affine control system while optimizing a quadratic cost and satisfying state and control constraints can be mapped to a sequence of Quadratic Programs (QPs) using Control Barrier Functions (CBF) and Control Lyapunov Functions (CLF). One of the main challenges in this method is that the QP could easily become infeasible under safety constraints of high relative degree, especially under tight control bounds. Recent work focused on deriving sufficient conditions for guaranteeing feasibility. The existing results are case-dependent. In this paper, we consider the general case. We define a feasibility constraint and propose a new type of CBF to enforce it.  Our method guarantees the feasibility of the above mentioned QPs, while satisfying safety requirements. We demonstrate the proposed method on an Adaptive Cruise Control (ACC) problem for a heterogeneous  platoon with tight control bounds, and compare our method to existing CBF-CLF approaches. The results show that our proposed approach can generate gradually transitioned control (without abrupt changes) with guaranteed feasibility and safety. 
\end{abstract}

\section{Introduction}
\label{sec:Introduction}

Safety is the primary concern in
the design and operation of autonomous systems. Many existing works enforce safety as constraints in optimal control problems using Barrier Functions (BF) and Control Barrier Functions (CBF). BFs are Lyapunov-like functions \cite{tee2009barrier} whose use can be traced back to optimization problems \cite{boyd2004convex}. They have been utilized to prove set invariance \cite{aubin2011viability}, \cite{prajna2007framework} to derive multi-objective control \cite{panagou2013multi}, \cite{wang2016multi}, and to control multi-robot systems
\cite{glotfelter2017nonsmooth}. 

CBFs are extensions of BFs used to enforce safety, i.e., rendering a set forward invariant, for an affine control system. It was proved in \cite{ames2016control} that if a CBF for a safe set satisfies Lyapunov-like conditions, then this set is forward invariant and safety is guaranteed. It has also been shown that stabilizing an affine control system to admissible states, while minimizing a quadratic cost subject to state and control constraints, can be mapped to a sequence of Quadratic Programs (QPs) \cite{ames2016control} by unifying CBFs and Control Lyapunov Functions (CLFs) \cite{ames2012control}. In its original form, this approach, which in this paper we will refer to as CBF-CLF, works only for safety constraints with relative degree one. Exponential CBFs \cite{nguyen2016exponential} were introduced to accommodate higher relative degrees. A more general form of exponential CBFs, called High-Order CBFs (HOCBFs), has been proposed in \cite{xiao2021high}. The CBF-CLF method has been widely used to enforce safety in many applications, including rehabilitative system control \cite{isaly2020zeroing}, adaptive cruise control \cite{ames2016control}, humanoid robot walking \cite{khazoom2022humanoid} and robot swarming \cite{cavorsi2022multi}. However, the aforementioned CBF-CLF-QP might be infeasible in the presence of tight or time-varying control bounds due to the conflicts between CBF constraints and control bounds.

There are several approaches that aim to enhance the feasibility of the CBF-CLF method, while guaranteeing safety and satisfying the control bounds. The authors of \cite{zeng2021safety2} formulated CBFs as constraints in a Nonlinear Model Predictive Control (NMPC) framework, which allows the controller to predict future state information up to a horizon larger than one. This leads to a less aggressive strategy than the original
one-step ahead approach. However, the corresponding optimization is overall nonlinear and non-convex, and the computation is expensive. An iterative approach based on a convex MPC with linearized, discrete-time CBFs was proposed in \cite{liu2023iterative}. However,  the linearization affects the safety guarantee and global optimality. The authors of \cite{dawson2022safe} and \cite{du2023reinforcement} developed a model-based learning and model-free learning approach, respectively, to synthesize controllers with safety guarantees. However, integrating CBF-CLF with learning models introduces uncertainty about whether these safety guarantees can always be maintained. The works in  \cite{gurriet2018online,singletary2019online,gurriet2020scalable,chen2021backup} are based on a set of backup policies that are used to extend the safe set to a larger viable set to enhance the feasible space of the system in a finite time horizon under input constraints. This backup approach has further been generalized to infinite time horizons \cite{squires2018constructive}, \cite{breeden2021high}. One limitation of these approaches is that they require prior knowledge on backup sets, policies, or nominal control laws. Another limitation is that these approaches may introduce overly aggressive or conservative control strategies. 

Adaptive CBFs (aCBFs) \cite{xiao2021adaptive} have been proposed for time-varying control bounds by introducing penalty functions in the HOCBFs constraints. These  provide flexible and adaptive control strategies over time. An Auxiliary-Variable Adaptive CBF (AVCBF) method was proposed in \cite{liu2023auxiliary}, which preserves the adaptive property of aCBFs \cite{xiao2021adaptive}, while generating smooth control policies near the boundaries of safe sets. The smooth control policies help to regulate a system's behavior with gradually transitioned control and output. AVCBFs also require less additional constraints and simpler parameter tuning compared to aCBFs \cite{xiao2021adaptive}. These approaches, however, cannot guarantee the feasibility of the optimization by only making some hard constraints soft or by extending the feasible spaces of the safe sets. The work in \cite{xiao2022sufficient} provided sufficient conditions to guarantee the feasibility of the CBF-CLF-QPs without softening hard constraints. However, the method was developed for a particular case, and it is not clear how it can be generalized. 

In this paper, we generalize the method from \cite{xiao2022sufficient} by proposing a new type of CBF for safety-critical control problems. Specifically, we define a feasibility constraint, an auxiliary variable,  and a CBF-based equation for the auxiliary variable that works for general affine control systems. We guarantee feasibility and safety under tight control bounds. Moreover, the generated control policy is smooth, without abrupt changes. We demonstrate the effectiveness of the proposed method on an adaptive cruise control problem with tight control bounds, and compare it to the existing CBF-CLF approaches. The results show that our proposed approach can generate smoother control with guaranteed feasibility and safety. 

\section{Preliminaries}
\label{sec:Preliminaries}

Consider an affine control system of the form
\begin{equation}
\label{eq:affine-control-system}
\dot{\boldsymbol{x}}=f(\boldsymbol{x})+g(\boldsymbol{x})\boldsymbol{u},
\end{equation}
 where $\boldsymbol{x}\in \mathbb{R}^{n}, f:\mathbb{R}^{n}\to\mathbb{R}^{n}$ and $g:\mathbb{R}^{n}\to\mathbb{R}^{n\times q}$ are locally Lipschitz, and $\boldsymbol{u}\in \mathcal U\subset \mathbb{R}^{q}$, where $\mathcal U$ denotes the control limitation set, which is assumed to be in the form: 
\begin{equation}
\label{eq:control-constraint}
\mathcal U \coloneqq \{\boldsymbol{u}\in \mathbb{R}^{q}:\boldsymbol{u}_{min}\le \boldsymbol{u} \le \boldsymbol{u}_{max} \}, 
\end{equation}
with $\boldsymbol{u}_{min},\boldsymbol{u}_{max}\in \mathbb{R}^{q}$ (vector inequalities are interpreted componentwise).
 
\begin{definition}[Class $\kappa$ function~\cite{Khalil:1173048}]
\label{def:class-k-f}
A continuous function $\alpha:[0,a)\to[0,+\infty],a>0$ is called a class $\kappa$ function if it is strictly increasing and $\alpha(0)=0.$
\end{definition}

\begin{definition}
\label{def:forward-inv}
A set $\mathcal C\subset \mathbb{R}^{n}$ is forward invariant for system \eqref{eq:affine-control-system} if its solutions for some $\boldsymbol{u} \in \mathcal U$ starting from any $\boldsymbol{x}(0) \in \mathcal C$ satisfy $\boldsymbol{x}(t) \in \mathcal C, \forall t \ge 0.$
\end{definition}

\begin{definition}
\label{def:relative-degree}
The relative degree of a differentiable function $b:\mathbb{R}^{n}\to\mathbb{R}$ is the minimum number of times we need to differentiate it along dynamics \eqref{eq:affine-control-system} until any component of $\boldsymbol{u}$ explicitly shows in the corresponding derivative. 
\end{definition}

In this paper,  a \textbf{safety requirement} is defined as $b(\boldsymbol{x})\ge0$, and \textbf{safety} is the forward invariance of the set $\mathcal C\coloneqq \{\boldsymbol{x}\in\mathbb{R}^{n}:b(\boldsymbol{x})\ge 0\}$. The relative degree of function $b$ is also referred to as the relative degree of safety requirement $b(\boldsymbol{x}) \ge 0$. For a requirement $b(\boldsymbol{x})\ge0$ with relative degree $m$ and $\psi_{0}(\boldsymbol{x})\coloneqq b(\boldsymbol{x}),$ we define a sequence of functions $\psi_{i}:\mathbb{R}^{n}\to\mathbb{R},\ i\in \{1,...,m\}$ as

\begin{equation}
\label{eq:sequence-f1}
\psi_{i}(\boldsymbol{x})\coloneqq\dot{\psi}_{i-1}(\boldsymbol{x})+\alpha_{i}(\psi_{i-1}(\boldsymbol{x})),\ i\in \{1,...,m\}, 
\end{equation}
where $\alpha_{i}(\cdot ),\ i\in \{1,...,m\}$ denotes a $(m-i)^{th}$ order differentiable class $\kappa$ function. We further define a sequence of sets $\mathcal C_{i}$ based on \eqref{eq:sequence-f1} as
\begin{equation}
\label{eq:sequence-set1}
\mathcal C_{i}\coloneqq \{\boldsymbol{x}\in\mathbb{R}^{n}:\psi_{i}(\boldsymbol{x})\ge 0\}, \ i\in \{0,...,m-1\}. 
\end{equation}

\begin{definition}[HOCBF~\cite{xiao2021high}]
\label{def:HOCBF}
Let $\psi_{i}(\boldsymbol{x}),\ i\in \{1,...,m\}$ be defined by \eqref{eq:sequence-f1} and $\mathcal C_{i},\ i\in \{0,...,m-1\}$ be defined by \eqref{eq:sequence-set1}. A function $b:\mathbb{R}^{n}\to\mathbb{R}$ is a High-Order Control Barrier Function (HOCBF) with relative degree $m$ for system \eqref{eq:affine-control-system} if there exist $(m-i)^{th}$ order differentiable class $\kappa$ functions $\alpha_{i},\ i\in \{1,...,m\}$ such that
\begin{equation}
\label{eq:highest-HOCBF}
\begin{split}
\sup_{\boldsymbol{u}\in \mathcal U}[L_{f}^{m}b(\boldsymbol{x})+L_{g}L_{f}^{m-1}b(\boldsymbol{x})\boldsymbol{u}+O(b(\boldsymbol{x}))
+\\
\alpha_{m}(\psi_{m-1}(\boldsymbol{x}))]\ge 0,
\end{split}
\end{equation}
$\forall \boldsymbol{x}\in \mathcal C_{0}\cap,...,\cap \mathcal C_{m-1},$ where $L_{f}^{m}$ denotes the $m^{th}$ Lie derivative along $f$ and $L_{g}$ denotes the matrix of Lie derivatives along the columns of $g$; 
$O(\cdot)=\sum_{i=1}^{m-1}L_{f}^{i}(\alpha_{m-1}\circ\psi_{m-i-1})(\boldsymbol{x})$ contains the remaining Lie derivatives along $f$ with degree less than or equal to $m-1$. $\psi_{i}(\boldsymbol{x})\ge0$ is referred to as the $i^{th}$ order HOCBF inequality (constraint in optimization). We assume that $L_{g}L_{f}^{m-1}b(\boldsymbol{x})\boldsymbol{u}\ne0$ on the boundary of set $\mathcal C_{0}\cap,...,\cap \mathcal C_{m-1}.$ 
\end{definition}

\begin{theorem}[Safety Guarantee~\cite{xiao2021high}]
\label{thm:safety-guarantee}
Given a HOCBF $b(\boldsymbol{x})$ from Def. \ref{def:HOCBF} with corresponding sets $\mathcal{C}_{0}, \dots,\mathcal {C}_{m-1}$ defined by \eqref{eq:sequence-set1}, if $\boldsymbol{x}(0) \in \mathcal {C}_{0}\cap \dots \cap \mathcal {C}_{m-1},$ then any Lipschitz controller $\boldsymbol{u}$ that satisfies the inequality in \eqref{eq:highest-HOCBF}, $\forall t\ge 0$ renders $\mathcal {C}_{0}\cap \dots \cap \mathcal {C}_{m-1}$ forward invariant for system \eqref{eq:affine-control-system}, $i.e., \boldsymbol{x} \in \mathcal {C}_{0}\cap \dots \cap \mathcal {C}_{m-1}, \forall t\ge 0.$
\end{theorem}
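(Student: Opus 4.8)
The plan is to prove forward invariance of the full intersection $\mathcal{C}_0 \cap \dots \cap \mathcal{C}_{m-1}$ by a backward induction on the index $i$, running from $i = m-1$ down to $i = 0$, establishing that each $\mathcal{C}_i$ is individually forward invariant. The starting observation is that the bracketed expression in \eqref{eq:highest-HOCBF} is, by the definition \eqref{eq:sequence-f1} and the decomposition of $\dot\psi_{m-1}$ into the Lie-derivative terms $L_f^m b + L_g L_f^{m-1} b\,\boldsymbol{u} + O(b)$, exactly $\dot\psi_{m-1}(\boldsymbol{x}) + \alpha_m(\psi_{m-1}(\boldsymbol{x})) = \psi_m(\boldsymbol{x})$. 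Hence any Lipschitz controller satisfying the inequality in \eqref{eq:highest-HOCBF} guarantees $\psi_m(\boldsymbol{x}(t)) \ge 0$ along the closed-loop trajectory for all $t \ge 0$, which is precisely the differential inequality $\dot\psi_{m-1}(\boldsymbol{x}(t)) \ge -\alpha_m(\psi_{m-1}(\boldsymbol{x}(t)))$.

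The core technical tool I would isolate is a comparison/invariance fact for scalar differential inequalities driven by a class $\kappa$ function: if a locally absolutely continuous scalar signal $y(t)$ satisfies $\dot y \ge -\alpha(y)$ with $\alpha$ a class $\kappa$ function (Def. \ref{def:class-k-f}) and $y(0) \ge 0$, then $y(t) \ge 0$ for all $t \ge 0$. The intuition is that whenever $y$ reaches $0$ the inequality forces $\dot y \ge -\alpha(0) = 0$, so $y$ cannot cross into negative values; a rigorous argument compares $y$ against the solution of the scalar ODE $\dot z = -\alpha(z)$, $z(0) = y(0)$, whose nonnegativity follows because the equilibrium $z \equiv 0$ cannot be crossed. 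Applying this fact with $y = \psi_{m-1}$ and $\alpha = \alpha_m$, together with $\psi_{m-1}(\boldsymbol{x}(0)) \ge 0$ (which holds since $\boldsymbol{x}(0) \in \mathcal{C}_{m-1}$), yields $\psi_{m-1}(\boldsymbol{x}(t)) \ge 0$ for all $t$, i.e., $\mathcal{C}_{m-1}$ is forward invariant.

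From here I would run the induction downward. Having established $\psi_{i}(\boldsymbol{x}(t)) \ge 0$ for all $t$, I rewrite this via \eqref{eq:sequence-f1} as $\dot\psi_{i-1}(\boldsymbol{x}(t)) \ge -\alpha_{i}(\psi_{i-1}(\boldsymbol{x}(t)))$ and apply the same comparison fact with $y = \psi_{i-1}$, $\alpha = \alpha_i$, using the initial condition $\psi_{i-1}(\boldsymbol{x}(0)) \ge 0$ guaranteed by $\boldsymbol{x}(0) \in \mathcal{C}_{i-1}$, to conclude $\psi_{i-1}(\boldsymbol{x}(t)) \ge 0$ for all $t$, hence forward invariance of $\mathcal{C}_{i-1}$. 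Iterating from $i = m-1$ down to $i = 1$ finally gives $\psi_0(\boldsymbol{x}(t)) = b(\boldsymbol{x}(t)) \ge 0$, so $\mathcal{C}_0$ is forward invariant as well. Since every $\mathcal{C}_i$ is forward invariant and the trajectory starts in their intersection, $\mathcal{C}_0 \cap \dots \cap \mathcal{C}_{m-1}$ is forward invariant, which is the claim.

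I expect the main obstacle to be making the comparison step fully rigorous rather than merely intuitive: one must ensure that $\psi_{i-1}(\boldsymbol{x}(t))$ is regular enough (at least absolutely continuous along the Lipschitz closed-loop solution, so that the differential inequality holds almost everywhere) and that the class $\kappa$ structure of $\alpha_i$ supplies the regularity needed to invoke the comparison principle and rule out any crossing of the zero level. The differentiability assumptions on the $\alpha_i$ built into \eqref{eq:sequence-f1} and Def. \ref{def:HOCBF} are exactly what make each $\psi_i$ well-defined and smooth enough for $\dot\psi_{i-1}$ to exist along trajectories, so those hypotheses carry the weight of this step.
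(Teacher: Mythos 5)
Your proposal is correct and matches the standard argument: this paper actually states Theorem~\ref{thm:safety-guarantee} without proof, importing it from \cite{xiao2021high}, and the proof given there proceeds exactly as you do --- rewriting the HOCBF inequality as $\psi_{m}(\boldsymbol{x})\ge 0$, i.e., $\dot{\psi}_{m-1}\ge -\alpha_{m}(\psi_{m-1})$, and then cascading the comparison-lemma nonnegativity result down the chain $\psi_{m-1},\psi_{m-2},\dots,\psi_{0}=b$ using the initial conditions $\boldsymbol{x}(0)\in\mathcal{C}_{i}$. Your closing remark about regularity is also the right caveat, and it is handled in \cite{xiao2021high} by the differentiability assumptions on the class $\kappa$ functions $\alpha_{i}$ and the Lipschitz continuity of the closed-loop dynamics.
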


\begin{definition}[CLF~\cite{ames2012control}]
\label{def:control-l-f}
A continuously differentiable function $V:\mathbb{R}^{n}\to\mathbb{R}$ is an exponentially stabilizing Control Lyapunov Function (CLF) for system \eqref{eq:affine-control-system} if there exist constants $c_{1}>0, c_{2}>0,c_{3}>0$ such that for $\forall \boldsymbol{x} \in \mathbb{R}^{n}, c_{1}\left \|  \boldsymbol{x} \right \| ^{2} \le V(\boldsymbol{x}) \le c_{2}\left \|  \boldsymbol{x} \right \| ^{2},$
\begin{equation}
\label{eq:clf}
\inf_{\boldsymbol{u}\in \mathcal U}[L_{f}V(\boldsymbol{x})+L_{g}V(\boldsymbol{x})\boldsymbol{u}+c_{3}V(\boldsymbol{x})]\le 0.
\end{equation}
\end{definition}

 Some existing works \cite{nguyen2016exponential},\cite{xiao2021high} combine HOCBFs \eqref{eq:highest-HOCBF} for systems with high relative degree with quadratic costs to form safety-critical optimization problems. The HOCBFs are used to ensure the forward invariance of sets related to safety requirements, therefore guaranteeing safety.
CLFs \eqref{eq:clf} can also be incorporated (see \cite{xiao2021high},\cite{xiao2021adaptive}) if exponential convergence of some states is desired. In these works, the control inputs are the decision (optimization) variables. Time is discretized into intervals, and an optimization problem with constraints given by HOCBFs (hard constraints) and CLFs (soft constraints) is solved in each
time interval. The state value is fixed
at the beginning of each interval, which results in linear constraints for the control - the resulting 
optimization problem is a QP. The optimal control obtained by solving each QP is applied at the beginning of the interval and held constant for the
whole interval. During each interval, the state is updated using dynamics \eqref{eq:affine-control-system}. 

This method, which throughout this paper we will referred to as the CBF-CLF-QP method, works conditioned on the fact that solving the QP at every time interval is feasible. 
However, this is not guaranteed, and, in fact, unlikely to happen, if the control bounds in Eqn. (\ref{eq:control-constraint}) are tight. The authors of \cite{xiao2022sufficient} proposed sufficient conditions to address the feasibility issue. In short, they created a feasibility constraint enforced by a first-order CBF constraint (hard constraint) to avoid conflict between the $m^{th}$ order HOCBF constraint (hard constraint for safety) and control constraints \eqref{eq:control-constraint} (hard constraint for control bounds). They  made sure this first order CBF constraint was also compatible with the hard constraints for safety and control bounds. This method increases the overall feasibility of solving QPs since all hard constraints are compatible with each other. The method was successfully applied to a traffic merging control problem for Connected and Automated Vehicles (CAVs) \cite{xu2022feasibility}. 
However, these sufficient conditions are heavily dependent on the considered dynamics and constraints. 
In this paper, we show how we can find and satisfy sufficient conditions for the feasibility of the QPs given general dynamics and constraints.

\section{Problem Formulation and Approach}
\label{sec:Problem Formulation and Approach}


Our goal is to generate a control strategy for system \eqref{eq:affine-control-system} such that it converges to a desired state, some measure of spent energy is minimized, safety is satisfied, and control limitations are observed. 

\textbf{Objective:} We consider the cost  
\begin{equation}
\label{eq:cost-function-1}
\begin{split}
 J(\boldsymbol{u}(t))=\int_{0}^{T} 
 \| \boldsymbol{u}(t) \| ^{2}dt+p\left \| \boldsymbol{x}(T)-\boldsymbol{x}_{e} \right \| ^{2},
\end{split}
\end{equation}
where $\left \| \cdot \right \|$ denotes the 2-norm of a vector, and $T>0$ denotes the ending time; $p>0$ denotes a weight factor and $\boldsymbol{x}_{e} \in \mathbb{R}^{n}$ is a desired state, which is assumed to be an equilibrium for system \eqref{eq:affine-control-system}. $p\left \| \boldsymbol{x}(T)-\boldsymbol{x}_{e} \right \| ^{2}$ denotes state convergence.

\textbf{Safety Requirement:} System \eqref{eq:affine-control-system} should always satisfy one or more safety requirements of the form: 
\begin{equation}
\label{eq:Safety constraint}
b(\boldsymbol{x})\ge 0, \boldsymbol{x} \in \mathbb{R}^{n}, \forall t \in [0, T],
\end{equation}
where $b:\mathbb{R}^{n}\to\mathbb{R}$ is assumed to be a continuously differentiable equation. 

\textbf{Control Limitations:} The controller $\boldsymbol{u}$ should always satisfy \eqref{eq:control-constraint} for all $t \in [0, T].$

A control policy is \textbf{feasible} if \eqref{eq:Safety constraint} and \eqref{eq:control-constraint} are strictly satisfied $\forall t \in [0, T].$ In this paper, we consider the following problem:

\begin{problem}
\label{prob:SACC-prob}
Find a feasible control policy for system \eqref{eq:affine-control-system} such that cost \eqref{eq:cost-function-1} is minimized.
\end{problem}

\textbf{Approach:} We define a HOCBF to enforce \eqref{eq:Safety constraint}.  We also use a relaxed CLF to realize the state convergence in \eqref{eq:cost-function-1}. Since the cost is quadratic in $\boldsymbol{u}$, we can formulate Prob. \ref{prob:SACC-prob} using CBF-CLF-QPs:
\begin{equation}
\label{eq:optimal control-cost}
\begin{split}
\min_{u(t),\delta(t)} \int_{0}^{T}(\left \| \boldsymbol{u}(t) \right \| ^{2}+p\delta^{2}(t))dt.
\end{split}
\end{equation}
subject to
\begin{subequations}
\label{eq:hard constraints}
\begin{align}
L_{f}^{m}b(\boldsymbol{x})+L_{g}L_{f}^{m-1}&b(\boldsymbol{x})\boldsymbol{u}+O(b(\boldsymbol{x}))
+\alpha_{m}(\psi_{m-1}(\boldsymbol{x}))\ge 0,\label{subeq:HOCBF as 1}\\ 
L_{f}V(\boldsymbol{x})+&L_{g}V(\boldsymbol{x})\boldsymbol{u}+c_{3}V(\boldsymbol{x})\le \delta(t),\label{subeq:CLF as 2}\\
&\boldsymbol{u}_{min}\le \boldsymbol{u} \le \boldsymbol{u}_{max},\label{subeq:control bounds as 3}
\end{align}
\end{subequations}
where $V(\boldsymbol{x}(t))=(\boldsymbol{x}(t)-\boldsymbol{x}_{e})^{T}P(\boldsymbol{x}(t)-\boldsymbol{x}_{e}), P$ is positive definite, $c_{3}>0, p>0$ and $\delta(t) \in \mathbb{R}$ is a relaxation variable (decision variable) that we wish to minimize for less violation of the strict CLF constraint.  $b(\boldsymbol{x})$ has relative degree $m$ and $V(\boldsymbol{x})$ has relative degree 1. The above optimization problem is \textbf{feasible at a given state $\boldsymbol{x}$} if all the constraints define a non-empty set for the decision variables $\boldsymbol{u},\delta.$

The CBF-CLF-QP approach to the above optimization problem, already summarized in Sec. \ref{sec:Preliminaries}, starts by discretizing the time interval $[0,T]$ into several equal intervals $[t_{k},t_{k+1}),t_{0}=0, t_{N}=T,k\in \{0,...,N-1\}$. 
At the beginning of each time interval $t_{k} (k\ge1)$, given 
$\boldsymbol{x}(t_{k})$, we solve the following optimization problem (the CBF-CLF-QP):
\begin{equation}
\label{eq:descritized hard constraints}
\begin{split}
(\boldsymbol{u}^{\ast}(t_{k}),\delta^{\ast}(t_{k}))=\arg \min_{\boldsymbol{u}(t_{k}),\delta(t_{k})} (\left \| \boldsymbol{u}(t_{k}) \right \| ^{2}+p\delta^{2}(t_{k})),
\end{split}
\end{equation}
subject to constraints \eqref{eq:hard constraints} (we initialize $\boldsymbol{x}(t_{0})$ to make it satisfy $\psi_{i}(\boldsymbol{x}(t_{0}))\ge 0, i\in \{1,...,m-1\}$ based on Thm. \ref{thm:safety-guarantee}). Then we apply the optimal controller $\boldsymbol{u}^{\ast}(t_{k})$ to system \eqref{eq:affine-control-system}. We use the value of the state $\boldsymbol{x}(t_{k+1})$ to formulate the next CBF-CLF-QP at $t_{k+1}.$ Repeatedly doing the above process we hope to finally get the discretized optimal control set $\boldsymbol{u}^{\ast}_{[0,T]}$ and state set $\boldsymbol{x}_{[0,T]}.$ However, the CBF-CLF-QPs could easily be infeasible at some $t_{k}$. In other words, after applying the constant vector $\boldsymbol{u}^{\ast}(t_{k-1})$ to system \eqref{eq:affine-control-system} for the time interval $[t_{k-1},t_{k})$, we may end up at a state $\boldsymbol{x}(t_{k})$ where
the HOCBF constraint \eqref{subeq:HOCBF as 1} conflicts with the control bounds \eqref{subeq:control bounds as 3}, which would render the CBF-CLF-QP corresponding to getting $\boldsymbol{u}^{\ast}(t_{k})$ infeasible. One way to deal with this would be to find appropriate hyperparameters (e.g., $p_{t_{k}},P_{t_{k}},c_{3,t_{k}},\alpha_{i,t_{k}}(\cdot)$)
such that the safety requirements and the control limitations are satisfied, i.e., $b(\boldsymbol{x}_{[0,T]})\ge 0,$ and $\boldsymbol{u}_{min}\le \boldsymbol{u}^{\ast}_{[0,T]} \le \boldsymbol{u}_{max}$. However, this is a difficult problem. 
Motivated by \cite{xiao2022sufficient}, our approach is to define a feasibility constraint and use CBF constraints to enforce the feasibility constraint. These CBF constraints will provide sufficient conditions for the feasibility of Prob. \ref{prob:SACC-prob}.

In this paper, we assume that Prob. \ref{prob:SACC-prob} {\em is solvable}. While we do not provide a formal definition for this assumption, we illustrate it through a simple example. Consider a vehicle with 1-D dynamics $\dot{x}=x+u,$ where $\dot{x},x,u$ denote its linear speed, position, and acceleration, respectively. If the control bound is $-1\le u \le 1$, initial position is $x(0)=1.8,$ and there is a tree located at $x_{\text{tree}}=2$, it is impossible for the vehicle to maintain a safe distance from the tree, i.e., keep $x_{\text{tree}}-x\ge0, \forall t \ge 0$, i.e., this problem is unsolvable.  

\section{Feasibility Constraint}
\label{sec:Feasibility Constraint}

We begin with a simple motivation example to illustrate the necessity for a feasibility constraint for the CBF-CLF-QPs. Consider a Simplified Adaptive Cruise Control (SACC) problem for two vehicles with the dynamics of ego vehicle expressed as 
\begin{small}
\begin{equation}
\label{eq:SACC-dynamics}
\underbrace{\begin{bmatrix}
\dot{z}(t) \\
\dot{v}(t) 
\end{bmatrix}}_{\dot{\boldsymbol{x}}(t)}  
=\underbrace{\begin{bmatrix}
 v_{p}-v(t) \\
 0
\end{bmatrix}}_{f(\boldsymbol{x}(t))} 
+ \underbrace{\begin{bmatrix}
  0 \\
  1 
\end{bmatrix}}_{g(\boldsymbol{x}(t))}u(t),
\end{equation}
\end{small}
where $v_{p}>0, v(t)>0$ denote the velocity of the lead vehicle (constant velocity) and ego vehicle, respectively. $z(t)$ denotes the distance between the lead and ego vehicle and $u(t)$ denotes the acceleration (control) of ego vehicle, subject to the control constraints
\begin{equation}
\label{eq:simple-control-constraint}
u_{min}\le u(t) \le u_{max}, \forall t \ge0,
\end{equation}
where $u_{min}<0$ and $u_{max}>0$ are the minimum and maximum control input, respectively.

 For safety, we require that $z(t)$ always be greater than or equal to the safety distance denoted by $l_{p}>0,$ i.e., $z(t)\ge l_{p}, \forall t \ge 0.$ Based on Def. \ref{def:HOCBF}, we define the safety constraint as $\psi_{0}(\boldsymbol{x})\coloneqq b(\boldsymbol{x})=z(t)-l_{p}\ge 0.$ From \eqref{eq:sequence-f1}-\eqref{eq:highest-HOCBF}, since the relative degree of $b(\boldsymbol{x})$ is 2, we have
\begin{equation}
\label{eq:SACC-HOCBF-sequence}
\begin{split}
&\psi_{1}(\boldsymbol{x})\coloneqq v_{p}-v(t)+k_{1}\psi_{0}(\boldsymbol{x})\ge 0
,\\
&\psi_{2}(\boldsymbol{x},\boldsymbol{u})\coloneqq -u(t)+k_{1}(v_{p}-v(t))+k_{2}\psi_{1}(\boldsymbol{x})\ge 0,
\end{split}
\end{equation}
where $\alpha_{1}(\psi_{0}(\boldsymbol{x}))\coloneqq k_{1}\psi_{0}(\boldsymbol{x}), \alpha_{2}(\psi_{1}(\boldsymbol{x}))\coloneqq k_{2}\psi_{1}(\boldsymbol{x}), k_{1}>0, k_{2}>0.$ The constant coefficients $k_{1},k_{2}$ are always chosen small to equip ego vehicle with a conservative control strategy to keep it safe, i.e., smaller $k_{1},k_{2}$ make ego vehicle brake earlier (see \cite{xiao2021high}). Suppose we wish to minimize the energy cost $\int_{0}^{T} u^{2}(t)dt$. We can then formulate the QPs using Eqns. \eqref{eq:optimal control-cost}, \eqref{eq:hard constraints} as described above to get the optimal controller for the SACC problem. However, the optimization problem can easily become infeasible if 
\begin{equation}
u(t)\le k_{1}(v_{p}-v(t))+k_{2}\psi_{1}(\boldsymbol{x})<u_{min}.  
\end{equation}
To avoid this, similar to safety, we can define a feasibility constraint 
\begin{equation}
\label{eq:SACC-feasibility constraint}
b_{F}(\boldsymbol{x})=-u_{min}+k_{1}(v_{p}-v(t))+k_{2}\psi_{1}(\boldsymbol{x})\ge 0
\end{equation}
and enforce it by making $b_{F}$ a CBF. From \eqref{eq:sequence-f1}-\eqref{eq:highest-HOCBF}, since the relative degree of $b_{F}(\boldsymbol{x})$ is 1, we can enforce the satisfaction of the feasibility constraint by imposing a first order CBF constraint as 
\begin{equation}
\label{eq:SACC-HOCBF-feasibility}
\psi_{F}(\boldsymbol{x},\boldsymbol{u})\coloneqq L_{f}b_{F}(\boldsymbol{x})+L_{g}b_{F}(\boldsymbol{x})u(t)+k_{F}b_{F}(\boldsymbol{x})\ge 0,
\end{equation}
where $\alpha_{F}(b_{F}(\boldsymbol{x}))\coloneqq k_{F}b_{F}(\boldsymbol{x}), k_{F}>0$. We add $\psi_{F}(\boldsymbol{x},\boldsymbol{u})\ge 0$ to the QP as a hard constraint to ensure $b_{F}(\boldsymbol{x})\ge 0$. Therefore, $\psi_{2}(\boldsymbol{x},\boldsymbol{u})\ge0$ does not conflict with \eqref{eq:simple-control-constraint}. In \cite{xiao2022sufficient} and \cite{xu2022feasibility},  $k_{F}$ in \eqref{eq:SACC-HOCBF-feasibility} was adjusted to make constraint \eqref{eq:SACC-HOCBF-feasibility} compatible with $\psi_{2}(\boldsymbol{x},\boldsymbol{u})\ge0$ and \eqref{eq:simple-control-constraint}. As a result, the feasible set of inputs under all hard constraints is not empty and the optimization is feasible. 

The method described above makes the problem feasible. However, it is not clear how to generalize this method to more complicated hard constraints and for the case when there are multiple control inputs in \eqref{eq:SACC-HOCBF-feasibility}. Before we introduce our method, we first provide a general definition of a feasibility constraint. 

\begin{assumption}\label{asm:Feasibility Constraint}
Let $b:\mathbb{R}^{n}\to\mathbb{R}$ be a HOCBF candidate as in Def. \ref{def:HOCBF} with relative degree $m\ge1$. We assume that all components of the vector $L_{g}L_{f}^{m-1}b(\boldsymbol{x})$ in the $m^{th}$ order constraint \eqref{eq:highest-HOCBF} do not change sign.
\end{assumption}
 If some component of vector $L_{g}L_{f}^{m-1}b(\boldsymbol{x})$ changes sign over time, we can consider Assumption \ref{asm:Feasibility Constraint} as an event and determine a set of events within the whole time period $[0, T]$. In each event, Assumption \ref{asm:Feasibility Constraint} is satisfied. This approach will transform Prob. \ref{prob:SACC-prob} into an event-triggered safety-critical control problem (see \cite{xiao2022event}), which can be solved when the switching frequency between two events is bounded.

\begin{definition}[Feasibility Constraint]
\label{def:Feasibility Constraint}
Assume we have a HOCBF candidate $b:\mathbb{R}^{n}\to\mathbb{R}$ with relative degree $m$ that satisfies the conditions in Assumption \ref{asm:Feasibility Constraint}. We define
\begin{equation}
\label{eq:Maximum value}
\begin{split}
L_{g}L_{f}^{m-1}b(\boldsymbol{x})\boldsymbol{u}_{M}=\sup_{\boldsymbol{u}\in \mathcal U}[L_{g}L_{f}^{m-1}b(\boldsymbol{x})\boldsymbol{u}],
\end{split}
\end{equation}
where $\mathcal U$ is defined the same as \eqref{eq:control-constraint} and not any component of $\boldsymbol{u}_{min},\boldsymbol{u}_{max}$ from $\mathcal U$ reaches negative infinity or infinity. A constraint
\begin{equation}
\label{eq:Feasibility Constraint}
\begin{split}
b_{F}(\boldsymbol{x})\coloneqq L_{f}^{m}b(\boldsymbol{x})+L_{g}L_{f}^{m-1}b(\boldsymbol{x})\boldsymbol{u}_{M}+
O_{F}(b(\boldsymbol{x}))+\\
\alpha_{m}(\psi_{m-1}(\boldsymbol{x}))\ge 0,
\end{split}
\end{equation}
where $b_{F}:\mathbb{R}^{n}\to\mathbb{R}$, is called a feasibility constraint and $\boldsymbol{u}_{min}\le \boldsymbol{u}_{M} \le \boldsymbol{u}_{max}$. $O_{F}(\cdot )$ is a function similar to $O(\cdot )$ in \eqref{eq:highest-HOCBF}.
\end{definition}
Note that $b_{F}(\boldsymbol{x})$ in the above definition is a CBF candidate. In fact, constraint \eqref{eq:Feasibility Constraint}  under \eqref{eq:Maximum value} is the same as \eqref{eq:highest-HOCBF}. Satisfying \eqref{eq:Feasibility Constraint} means there always exist solutions for $\boldsymbol{u}$ under constraints \eqref{subeq:HOCBF as 1}, \eqref{subeq:control bounds as 3} and Assumption \ref{asm:Feasibility Constraint}.

In \cite{xiao2022sufficient}, the authors introduced sufficient CBF constraints to ensure the feasibility of the QPs for an adaptive cruise control problem, where the expression of the feasibility constraint is similar to \eqref{eq:SACC-feasibility constraint}. We notice that, for this case, only one control input with a constant coefficient is involved, which makes this problem easy. In other words, the method introduced in \cite{xiao2022sufficient} is case-dependent, and cannot handle feasibility constraints with complicated expressions, e.g., when there are many control inputs in the CBFs, and when the coefficients of these control inputs vary over time. 

All these issues will be addressed by finding a method to ensure the feasibility constraint \eqref{eq:Feasibility Constraint}, since this allows for multiple control inputs with time-varying coefficients. With the expression of the feasibility constraint, we plan to find a first order CBF $\psi_{F}$ to ensure \eqref{eq:Feasibility Constraint} without conflicting with other hard constraints, which will be illustrated in Sec. \ref{sec:Controller Design for Safety and Feasibility}.

\section{Controller Design for Safety and Feasibility} 
\label{sec:Controller Design for Safety and Feasibility}

In this section, we develop a sufficient condition for CBF-CLF-QP feasibility by defining a CBF constraint $\psi_{F}\ge0$. Motivated by \cite{liu2023auxiliary}, given a CBF candidate $b_{F}:\mathbb{R}^{n}\to\mathbb{R}$ with relative degree 1 for system \eqref{eq:affine-control-system}, we can define a positive auxiliary function $\mathcal{A}(a(t)):\mathbb{R}\to\mathbb{R}^{+}$ based on a time varying auxiliary variable $a(t)$, which is combined with $b_{F}(\boldsymbol{x}(t))$ as $\mathcal{A}(a(t))b_{F}(\boldsymbol{x}(t))$ and used to adaptively enhance the compatibility of hard constraints under CBF-CLF-QPs. The modified function $\mathcal{A}(a(t))b_{F}(\boldsymbol{x}(t))$ has relative degree 1 with respect to system \eqref{eq:affine-control-system}. Based on Thm. \ref{thm:safety-guarantee}, we propose the following lemma:

\begin{lemma}
\label{lemma:Auxiliary equation CBF}
Given a CBF candidate $b_{F}(\boldsymbol{x})$ from Def. \ref{def:Feasibility Constraint} and an auxiliary function $\mathcal{A}(a)$ stated above with the corresponding set $\mathcal C_{F}\coloneqq \{\boldsymbol{x}\in\mathbb{R}^{n}:b_{F}(\boldsymbol{x})> 0\}$, if $\boldsymbol{x}(0) \in \mathcal {C}_{F},$ then any Lipschitz controller $\boldsymbol{u}$ that satisfies the constraint:
\begin{equation}
\label{eq:modified CBF constraint}
\begin{split}
\psi_{F}(\boldsymbol{x},\boldsymbol{u},\boldsymbol{a},\dot{\boldsymbol{a}})=\sup_{\boldsymbol{u}\in \mathcal U}[\frac{\partial \mathcal{A}(a)}{\partial a}\dot{a}b_{F}(\boldsymbol{x})+ \mathcal{A}(a)(L_{f}b_{F}(\boldsymbol{x})+\\L_{g}b_{F}(\boldsymbol{x})\boldsymbol{u})+\alpha_{F}(\mathcal{A}(a)b_{F}(\boldsymbol{x}))]\ge \epsilon,
\end{split}
\end{equation}
$\forall t\ge 0$ renders $\mathcal {C}_{F}$ forward invariant for system \eqref{eq:affine-control-system}, $i.e., \boldsymbol{x} \in \mathcal {C}_{F}, \forall t\ge 0$, where $\epsilon$ is a positive constant which can be arbitrarily small. 
\end{lemma}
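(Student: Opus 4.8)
The plan is to reduce the claim to the relative-degree-one safety result of Thm.~\ref{thm:safety-guarantee} applied to the composite function $W(\boldsymbol{x},a)\coloneqq \mathcal{A}(a)b_{F}(\boldsymbol{x})$, and then to upgrade the resulting closed-set invariance to strict, open-set invariance using the positive margin $\epsilon$. First I would augment the state with the auxiliary variable $a$, so that the closed loop evolves as a well-defined system in $(\boldsymbol{x},a)$ driven by a Lipschitz $\boldsymbol{u}$ together with $\dot{a}$. Since $b_{F}$ has relative degree $1$ and $\mathcal{A}$ depends only on $a$, the product $W$ is continuously differentiable of relative degree $1$, and a direct computation of $\dot{W}$ along the augmented dynamics identifies the bracketed expression in \eqref{eq:modified CBF constraint} exactly with $\dot{W}+\alpha_{F}(W)$. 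Hence any admissible controller renders, along its trajectory,
\begin{equation}
\label{eq:Wdot-bound}
\dot{W}(t)+\alpha_{F}(W(t))\ge \epsilon, \quad \forall t\ge 0.
\end{equation}

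Next I would exploit positivity of the auxiliary function to transfer conclusions between $W$ and $b_{F}$. Because $\mathcal{A}:\mathbb{R}\to\mathbb{R}^{+}$ is strictly positive and $a(t)$ remains finite on any finite interval under Lipschitz dynamics, we have $\mathcal{A}(a(t))>0$ for all $t$, so $W(t)>0 \iff b_{F}(\boldsymbol{x}(t))>0$. Therefore $\mathcal{C}_{F}=\{\boldsymbol{x}\in\mathbb{R}^{n}:b_{F}(\boldsymbol{x})>0\}$ is forward invariant if and only if $W$ stays strictly positive, and it suffices to show that \eqref{eq:Wdot-bound} together with $W(0)>0$ (which holds since $\boldsymbol{x}(0)\in\mathcal{C}_{F}$) forces $W(t)>0$ for all $t\ge 0$.

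The strict-positivity step is where the margin $\epsilon$ does the essential work, and it is the part that goes beyond a verbatim application of Thm.~\ref{thm:safety-guarantee}: that theorem applied to $W$ with $\epsilon=0$ yields only invariance of the closed set $\{W\ge 0\}$, whereas here $\mathcal{C}_{F}$ is open. I would argue by contradiction using a Nagumo-type boundary argument. If $W(0)>0$ but $W$ fails to be strictly positive for all $t$, let $t^{\ast}$ be the first time with $W(t^{\ast})=0$; by continuity $W(t)>0$ on $[0,t^{\ast})$, so the left difference quotient gives $\dot{W}(t^{\ast})\le 0$, while \eqref{eq:Wdot-bound} evaluated at $t^{\ast}$ gives $\dot{W}(t^{\ast})\ge \epsilon-\alpha_{F}(0)=\epsilon>0$, using $\alpha_{F}(0)=0$ from Def.~\ref{def:class-k-f}, a contradiction. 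Equivalently, one may invoke the comparison lemma against $\dot{y}=\epsilon-\alpha_{F}(y)$ with $y(0)=W(0)>0$, whose solution remains positive. Either route yields $W(t)>0$, hence $b_{F}(\boldsymbol{x}(t))>0$ and $\boldsymbol{x}(t)\in\mathcal{C}_{F}$ for all $t\ge 0$, which is the asserted forward invariance. The one subtlety I would flag explicitly is that the whole reduction hinges on the sign equivalence $W>0 \iff b_{F}>0$ holding throughout; this is immediate from the strict positivity of $\mathcal{A}$ but would break if $\mathcal{A}$ were permitted to vanish, so that positivity assumption on $\mathcal{A}$ must be stated and used.
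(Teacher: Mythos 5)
Your proposal is correct and takes essentially the same approach as the paper: both treat $W=\mathcal{A}(a)b_{F}(\boldsymbol{x})$ as satisfying the differential inequality $\dot{W}+\alpha_{F}(W)\ge \epsilon$, run a barrier-type argument to keep $W$ strictly positive, and use $\mathcal{A}(a)>0$ to transfer that strict positivity back to $b_{F}$, hence forward invariance of $\mathcal{C}_{F}$. The only difference is where the argument is anchored --- you derive the contradiction at the level $W=0$, where the margin yields $\dot{W}\ge\epsilon>0$, while the paper argues at the level $W=\alpha_{F}^{-1}(\epsilon)$, where it obtains $\dot{W}\ge 0$ --- which makes your rendering slightly cleaner and more explicit, but not a different method.
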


\begin{proof}
If $b_{F}(\boldsymbol{x})$ is a CBF candidate that is first order differentiable and at $t=t_{s}\ge 0, \alpha_{F}(\mathcal{A}(a)b_{F}(\boldsymbol{x}))_{t=t_{s}}=\epsilon>0$, then based on Def. \ref{def:class-k-f}, we have $\mathcal{A}(a)b_{F}(\boldsymbol{x})_{t=t_{s}}=\varepsilon>0.$ From \eqref{eq:modified CBF constraint}, we have
\begin{equation}
\begin{split}
\frac{\partial \mathcal{A}(a)b_{F}(\boldsymbol{x})}{\partial t}_{t=t_{s}}=\sup_{\boldsymbol{u}\in \mathcal U}[\frac{\partial \mathcal{A}(a)}{\partial a}\dot{a}b_{F}(\boldsymbol{x})+ \mathcal{A}(a)(L_{f}b_{F}(\boldsymbol{x})+\\L_{g}b_{F}(\boldsymbol{x})\boldsymbol{u})]_{t=t_{s}}\ge 0,
\end{split}
\end{equation}
which is equivalent to make $\mathcal{A}(a)b_{F}(\boldsymbol{x})\ge \varepsilon> 0, \alpha_{F}(\mathcal{A}(a)b_{F}(\boldsymbol{x}))\ge \epsilon>0, \forall t\ge 0.$ Since $\mathcal{A}(a)>0$, we have $b_{F}(\boldsymbol{x})> 0,$ which shows the forward invariance of $\mathcal {C}_{F}.$  
\end{proof}
With Lemma. \ref{lemma:Auxiliary equation CBF}, we can formulate the new constraint \eqref{eq:modified CBF constraint} as one sufficient hard constraint in \eqref{eq:hard constraints}.

\begin{remark}
\label{rem:positive auxiliary equation} 
There are many methods to ensure $\mathcal{A}(a)$ is always positive. One intuitive way is to use an exponential function, e.g., $\mathcal{A}(a)=e^{a}.$ We can also mimic the method from \cite{liu2023auxiliary} to define auxiliary inputs and use them as decision variables in cost \eqref{eq:optimal control-cost} to ensure $\mathcal{A}(a)=a>0$. However, this method will involve more extra constraints, which will affect the feasibility of the optimization. Therefore, we only consider $\mathcal{A}  (a)=e^{a}$ in Sec. \ref{sec:Case Study and Simulations}.
\end{remark}


\begin{theorem}
\label{thm:feasibility and safety}
Let $b:\mathbb{R}^{n}\to\mathbb{R}$ be a HOCBF candidate with relative degree $m$ that satisfies the conditions stated in Assumption \ref{asm:Feasibility Constraint}. 
Let $b_{F}(\boldsymbol{x})$ be a CBF candidate as in Def. \ref{def:Feasibility Constraint}, and $\mathcal{A}(a)$
be an auxiliary function as in Lemma \ref{lemma:Auxiliary equation CBF}, with corresponding sets $\mathcal{C}_{0}, \dots,\mathcal {C}_{m-1}$ defined by \eqref{eq:sequence-set1} and another set defined by $\mathcal C_{A}\coloneqq \{\boldsymbol{x}\in\mathbb{R}^{n}:\mathcal{A}(a)b_{F}(\boldsymbol{x})> 0\}$. Assume $\boldsymbol{x}(0) \in \mathcal {C}_{0}\cap \dots \cap \mathcal {C}_{m-1}\cap \mathcal C_{A}$. Then any Lipschitz controller $\boldsymbol{u}$ that satisfies constraints \eqref{eq:modified CBF constraint}, \eqref{subeq:HOCBF as 1} and \eqref{subeq:control bounds as 3} renders $\mathcal {C}_{0}$ forward invariant for system \eqref{eq:affine-control-system}, $i.e., \boldsymbol{x} \in \mathcal {C}_{0}, \forall t\ge 0.$ If  we define
\begin{equation}
\label{eq:pre-defined variables}
\begin{split}
\dot{a}=-\frac{\partial a}{\partial \mathcal{A}(a)}\frac{(\mathcal{A}(a)L_{f}b_{F}(\boldsymbol{x})+\lambda)}{b_{F}(\boldsymbol{x})},\\
\lambda=\mathcal{A}(a)L_{g}b_{F}(\boldsymbol{x})\boldsymbol{u}_{M},
\end{split}
\end{equation}
 and $\frac{\partial \mathcal{A}(a)}{\partial a}\ne 0,\forall t\ge 0,$ then constraints \eqref{eq:modified CBF constraint}, \eqref{subeq:HOCBF as 1} and \eqref{subeq:control bounds as 3} are always compatible with each other, i.e., the optimization problem using CBF-CLF-QPs is always feasible, and $b$ becomes a valid HOCBF ($a$, and $\lambda$ are two predefined time varying variables). 
\end{theorem}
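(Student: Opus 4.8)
The plan is to treat the theorem's two assertions in a linked order: first show that the feasibility constraint keeps $b_F$ strictly positive, then use that positivity both to certify $b$ as a valid HOCBF (yielding safety) and to exhibit an explicit admissible control (yielding feasibility). First I would invoke Lemma \ref{lemma:Auxiliary equation CBF} directly: since the controller satisfies \eqref{eq:modified CBF constraint} and $\boldsymbol{x}(0)\in\mathcal{C}_A$, the set $\mathcal{C}_A=\{\mathcal{A}(a)b_F(\boldsymbol{x})>0\}$ is forward invariant, so $\mathcal{A}(a)b_F(\boldsymbol{x})>0$, and hence $b_F(\boldsymbol{x})>0$ for all $t\ge0$ because $\mathcal{A}(a)>0$. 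The key observation for safety is the remark following Def. \ref{def:Feasibility Constraint}: under \eqref{eq:Maximum value}, $b_F(\boldsymbol{x})$ is exactly the supremum appearing in the HOCBF condition \eqref{eq:highest-HOCBF}. Thus $b_F(\boldsymbol{x})>0$ is equivalent to \eqref{subeq:HOCBF as 1} admitting a solution $\boldsymbol{u}\in\mathcal{U}$, which is precisely the requirement that $b$ be a valid HOCBF. With $b$ a valid HOCBF and the controller satisfying \eqref{subeq:HOCBF as 1}, Thm. \ref{thm:safety-guarantee} gives forward invariance of $\mathcal{C}_{0}\cap\dots\cap\mathcal{C}_{m-1}$, and in particular of $\mathcal{C}_0$.

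For the feasibility claim I would substitute the predefined $\dot{a}$ and $\lambda$ from \eqref{eq:pre-defined variables} into the left-hand side of \eqref{eq:modified CBF constraint}. Using $\frac{\partial \mathcal{A}(a)}{\partial a}\frac{\partial a}{\partial \mathcal{A}(a)}=1$ (well-defined since $\frac{\partial \mathcal{A}(a)}{\partial a}\ne0$) and $b_F(\boldsymbol{x})>0$ (so the division by $b_F$ in $\dot{a}$ is legitimate), the term $\frac{\partial \mathcal{A}(a)}{\partial a}\dot{a}\,b_F(\boldsymbol{x})$ collapses to $-\mathcal{A}(a)L_f b_F(\boldsymbol{x})-\lambda=-\mathcal{A}(a)L_f b_F(\boldsymbol{x})-\mathcal{A}(a)L_g b_F(\boldsymbol{x})\boldsymbol{u}_M$. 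The $L_f b_F$ contributions then cancel and \eqref{eq:modified CBF constraint} reduces to the affine-in-$\boldsymbol{u}$ constraint $\mathcal{A}(a)L_g b_F(\boldsymbol{x})(\boldsymbol{u}-\boldsymbol{u}_M)+\alpha_F(\mathcal{A}(a)b_F(\boldsymbol{x}))\ge\epsilon$. The point of this reduction is that $\boldsymbol{u}=\boldsymbol{u}_M$ becomes a universal feasible candidate: (i) by Assumption \ref{asm:Feasibility Constraint} the sign of $L_g L_f^{m-1}b$ is fixed, so $\boldsymbol{u}_M$ is the constant componentwise extreme point of $\mathcal{U}$ and satisfies \eqref{subeq:control bounds as 3}; (ii) evaluating \eqref{subeq:HOCBF as 1} at $\boldsymbol{u}_M$ returns exactly $b_F(\boldsymbol{x})>0$, so the HOCBF constraint holds; and (iii) at $\boldsymbol{u}=\boldsymbol{u}_M$ the term $L_g b_F(\boldsymbol{x})(\boldsymbol{u}-\boldsymbol{u}_M)$ vanishes, leaving $\alpha_F(\mathcal{A}(a)b_F(\boldsymbol{x}))\ge\epsilon$.

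The step I expect to be the main obstacle is discharging this last inequality $\alpha_F(\mathcal{A}(a)b_F(\boldsymbol{x}))\ge\epsilon$, since the forward invariance from Lemma \ref{lemma:Auxiliary equation CBF} a priori delivers only $b_F>0$, not a quantitative margin. I would close this gap by re-examining the Lemma \ref{lemma:Auxiliary equation CBF} argument at the level set where $\alpha_F(\mathcal{A}(a)b_F)=\epsilon$: there the constraint $\psi_F\ge\epsilon$ forces $\frac{d}{dt}(\mathcal{A}(a)b_F)\ge0$, so $\mathcal{A}(a)b_F$ cannot fall below $\varepsilon:=\alpha_F^{-1}(\epsilon)$, which renders $\alpha_F(\mathcal{A}(a)b_F)\ge\epsilon$ invariant, provided the initial condition is taken in $\{\mathcal{A}(a)b_F\ge\varepsilon\}$ (a harmless strengthening of $\mathcal{C}_A$ since $\epsilon$ may be arbitrarily small). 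Combining (i)--(iii) then shows $\boldsymbol{u}_M$ simultaneously satisfies \eqref{eq:modified CBF constraint}, \eqref{subeq:HOCBF as 1}, and \eqref{subeq:control bounds as 3}, so their feasible set is nonempty at every state and the CBF-CLF-QP is always feasible; the validity of $b$ as a HOCBF follows as in the safety step. A secondary point to verify is that the remainder $O_F$ in \eqref{eq:Feasibility Constraint} coincides with $O$ in \eqref{eq:highest-HOCBF}, which is what makes the evaluation of \eqref{subeq:HOCBF as 1} at $\boldsymbol{u}_M$ return exactly $b_F$.
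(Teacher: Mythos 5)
Your proof is correct and takes essentially the same route as the paper's own proof: Lemma \ref{lemma:Auxiliary equation CBF} yields $b_{F}(\boldsymbol{x})>0$, Thm. \ref{thm:safety-guarantee} yields forward invariance of $\mathcal{C}_{0}\cap\dots\cap\mathcal{C}_{m-1}$, and substituting the predefined $\dot{a}$ and $\lambda$ collapses \eqref{eq:modified CBF constraint} to $\mathcal{A}(a)L_{g}b_{F}(\boldsymbol{x})(\boldsymbol{u}-\boldsymbol{u}_{M})+\alpha_{F}(\mathcal{A}(a)b_{F}(\boldsymbol{x}))\ge\epsilon$, so that $\boldsymbol{u}=\boldsymbol{u}_{M}$ is a common feasible point of \eqref{eq:modified CBF constraint}, \eqref{subeq:HOCBF as 1} and \eqref{subeq:control bounds as 3}. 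Your treatment is in fact slightly more careful than the paper's at the two points it glosses over: the quantitative margin $\alpha_{F}(\mathcal{A}(a)b_{F}(\boldsymbol{x}))\ge\epsilon$ (which the paper asserts by citing the proof of Lemma \ref{lemma:Auxiliary equation CBF}, exactly the level-set argument you spell out) and the identification $O_{F}=O$ needed for \eqref{subeq:HOCBF as 1} evaluated at $\boldsymbol{u}_{M}$ to equal $b_{F}(\boldsymbol{x})$.
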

\begin{proof}
Given $\boldsymbol{x}(0) \in \mathcal {C}_{0}\cap \dots \cap \mathcal {C}_{m-1}\cap \mathcal C_{A},$ since $\boldsymbol{x} \in \mathcal {C}_{m-1} \cap \mathcal {C}_{A}, \forall t \ge0$ is ensured by satisfying constraints \eqref{eq:modified CBF constraint}, \eqref{subeq:HOCBF as 1} and \eqref{subeq:control bounds as 3}, based on Lemma. \ref{lemma:Auxiliary equation CBF}, $b_{F}(\boldsymbol{x})> 0$ is guaranteed $\forall t \ge0.$ Based on Thm. \ref{thm:safety-guarantee}, $\mathcal {C}_{0}\cap \dots \cap \mathcal {C}_{m-1}$ is guaranteed forward invariant for system \eqref{eq:affine-control-system}, thus $b(\boldsymbol{x})\ge 0$ is guaranteed $\forall t \ge0.$ Since $b_{F}(\boldsymbol{x})> 0$ is guaranteed, based on \eqref{eq:Maximum value}, we have 
\begin{equation}
\label{eq:Feasibility Constraint 2}
\begin{split}
b_{F}(\boldsymbol{x})=\sup_{\boldsymbol{u}\in \mathcal U}[L_{g}L_{f}^{m-1}b(\boldsymbol{x})\boldsymbol{u}]+L_{f}^{m}b(\boldsymbol{x})+ \\O_{F}(b(\boldsymbol{x}))+
\alpha_{m}(\psi_{m-1}(\boldsymbol{x}))> 0,
\end{split}
\end{equation}
which shows that constraint \eqref{subeq:HOCBF as 1} is compatible with \eqref{subeq:control bounds as 3}. With predefined $\dot{a}$ and $\lambda$ introduced above, we can rewrite \eqref{eq:modified CBF constraint} into the constraint
\begin{small}
\begin{equation}
\label{eq:modified CBF constraint 2}
\begin{split}
\sup_{\boldsymbol{u}\in \mathcal U}[L_{g}b_{F}(\boldsymbol{x})\boldsymbol{u}-L_{g}b_{F}(\boldsymbol{x})\boldsymbol{u}_{M}+
\frac{\alpha_{F}(\mathcal{A}(a)b_{F}(\boldsymbol{x}))}{\mathcal{A}(a)}]\ge \frac{\epsilon}{\mathcal{A}(a)} ,
\end{split}
\end{equation}
\end{small}
which is always satisfied since the constraint $\alpha_{F}(\mathcal{A}(a)b_{F}(\boldsymbol{x}))\ge \epsilon$ is guaranteed satisfied (based on the proof of Lemma. \ref{lemma:Auxiliary equation CBF}) and  
\begin{equation}
\label{eq:modified CBF constraint 3}
\sup_{\boldsymbol{u}\in \mathcal U}[L_{g}b_{F}(\boldsymbol{x})\boldsymbol{u}-L_{g}b_{F}(\boldsymbol{x})\boldsymbol{u}_{M}]\ge 0
\end{equation}
is guaranteed to be satisfied. Obviously controller $\boldsymbol{u}=\boldsymbol{u}_{M}$ satisfies constraints  \eqref{eq:modified CBF constraint 2} and \eqref{eq:Feasibility Constraint 2}. If \eqref{eq:modified CBF constraint 2} is satisfied, \eqref{eq:modified CBF constraint} is definitely satisfied, therefore constraint \eqref{eq:modified CBF constraint} is compatible with \eqref{subeq:HOCBF as 1}  and \eqref{subeq:control bounds as 3}. Consequently, the optimization problem using CBF-CLF-QPs with constraints \eqref{subeq:HOCBF as 1}, \eqref{subeq:control bounds as 3} and \eqref{eq:modified CBF constraint} is always feasible.  
\end{proof}

 Previous works \cite{zeng2021safety}, \cite{liu2023auxiliary}, \cite{xiao2021adaptive} introduced relaxation variables or auxiliary control inputs in the CBF constraint \eqref{subeq:HOCBF as 1} as decision variables, which make \eqref{subeq:HOCBF as 1} a soft constraint to enhance the feasibility. As opposed to the above methods, we introduce a positive auxiliary function $\mathcal{A}(a)$ in Thm. \ref{thm:feasibility and safety} for the feasibility constraint, which contains an auxiliary variable $a$. The derivative of the auxiliary variable $\dot{a}$ is defined as a function with arguments $a,\boldsymbol{x},\lambda$ to make the CBF constraint \eqref{eq:modified CBF constraint} compatible with \eqref{subeq:HOCBF as 1}, \eqref{subeq:control bounds as 3}, while guaranteeing safety. All these constraints are hard constraints. 

We define $\dot{a}$ and $\lambda$  precisely to align the feasible regions of constraints \eqref{eq:modified CBF constraint} and \eqref{subeq:HOCBF as 1}. Simultaneously, under the action of constraint \eqref{eq:modified CBF constraint}, the feasible regions of constraints \eqref{subeq:HOCBF as 1} and \eqref{subeq:control bounds as 3} are guaranteed to overlap, ensuring simultaneous overlap of the feasible regions for constraints \eqref{eq:modified CBF constraint}, \eqref{subeq:HOCBF as 1}, and \eqref{subeq:control bounds as 3}. This guarantees a feasible solution ($\boldsymbol{u}=\boldsymbol{u}_{M}$) for the optimization problem under these constraints.

\section{Case Study and Simulations}
\label{sec:Case Study and Simulations}

In this section, we consider the Adaptive Cruise Control (ACC) problem for a heterogeneous platoon (3 vehicles), which is more realistic than the SACC problem introduced in Sec. \ref{sec:Feasibility Constraint} and the case study from \cite{ames2016control}, \cite{xiao2021adaptive}.
\subsection{Vehicle Dynamics}
We consider a nonlinear vehicle dynamics in the form
\begin{small}
\begin{equation}
\label{eq:ACC-dynamics}
\underbrace{\begin{bmatrix}
\dot{x}_{j}(t) \\
\dot{v}_{j}(t) 
\end{bmatrix}}_{\dot{\boldsymbol{x}}_{j}(t)}  
=\underbrace{\begin{bmatrix}
 v_{j}(t) \\
 -\frac{1}{M_{j}}F_{r}(v_{j}(t))
\end{bmatrix}}_{f(\boldsymbol{x}_{j}(t))} 
+ \underbrace{\begin{bmatrix}
  0 \\
  \frac{1}{M_{j}} 
\end{bmatrix}}_{g(\boldsymbol{x}_{j}(t))}u_{j}(t),
\end{equation}
\end{small}
where $M_{j}$ denotes the mass of the $j^{th}$ vehicle, $j=1,2,3$ and $F_{r}(v_{j}(t))=f_{0}sgn(v_{j}(t))+f_{1}v_{j}(t)+f_{2}v^{2}_{j}(t)$ is the resistance force as in \cite{Khalil:1173048};  $f_{0},f_{1},f_{2}$ are positive scalars determined empirically and $v_{j}(t)>0$ denotes the velocity of the vehicle; $x_{j}(t),u_{j}(t)$ denote the position and acceleration of the vehicle, respectively. The first term in $F_{r}(t)$ denotes the Coulomb friction force, the second term denotes the viscous friction force and the last term denotes the aerodynamic drag.

\subsection{Vehicle Limitations}
Vehicle limitations include vehicle constraints on safe distance, speed and acceleration. We consider 3 vehicles driving along the same direction in a line. The first vehicle leads the second vehicle and the second vehicle leads the third vehicle.

\textbf{Safe Distance Constraint:} The distance is considered safe if $x_{j-1}(t)-x_{j}(t)>l_{p},j=2,3$ is satisfied $\forall t \in [0,T]$, where $l_{p}$ denotes the minimum distance two vehicles should maintain, and $j$ is the index of the second and third vehicles.  

\textbf{Speed Constraint:} The second and third vehicles should achieve desired speeds $v_{j,d}>0$, $j=2,3$, respectively.

\textbf{Acceleration Constraint:} The second and third vehicles should minimize the following cost
\begin{small}
\begin{equation}
\label{eq:minimal-u}
\min_{u_{j}(t)} \int_{0}^{T}(\frac{u_{j}(t)-F_{r}(v_{j}(t))}{M_{j}})^{2}dt 
\end{equation}
\end{small}
when the acceleration is constrained in the form 
\begin{equation}
\label{eq:constraint-u}
-c_{j,d}M_{j}g\le u_{j}(t) \le c_{j,a}M_{j}g, \forall t \in [0,T], 
\end{equation}
where $g$ denotes the gravity constant, $c_{j,d}>0$ and $c_{j,a}>0$ are deceleration and acceleration coefficients respectively, $j=2,3.$

\begin{problem}
\label{prob:ACC-prob}
Determine the optimal controllers for the second and third vehicles governed by dynamics \eqref{eq:ACC-dynamics}, subject to the vehicle constraints on safe distance, speed and acceleration. 
\end{problem}

We consider a \textbf{decentralized} optimal control framework for the platoon, i.e., the kinematic information of the lead vehicle is generated by solving Prob. \ref{prob:ACC-prob} and assumed to be known by the following vehicle. Since there is no vehicle leading the first vehicle, we define the controller for the first vehicle as
\begin{equation}
\label{eq:first vehicle controller}
u_{1}(t)=2M_{1}\sin(2\pi t)+F_{r}(v_{1}(t)),
\end{equation}
which represents the swift change of control strategy of the first vehicle. To satisfy the constraint on speed, we define a CLF $V_{j}(\boldsymbol{x}(t)) \coloneqq(v_{j}(t)-v_{j,d})^{2}$ with $c_{j,1}=c_{j,2}=1$ to stabilize $v_{j}(t)$ to $v_{j,d}$ and formulate the relaxed constraint in \eqref{eq:clf} as
\begin{equation}
\label{eq:ACC-clf}
L_{f}V_{j}(\boldsymbol{x}_{j}(t))+L_{g}V_{j}(\boldsymbol{x}_{j}(t))u_{j}(t)+c_{j,3}V_{j}(\boldsymbol{x}_{j}(t))\le \delta_{j}(t), 
\end{equation}
where $\delta_{j}(t)$ is a relaxation that makes \eqref{eq:ACC-clf} a soft constraint.

To satisfy the constraints on safety distance and acceleration, we define a continuous function $b_{j}(\boldsymbol{x}_{j}(t))=x_{j-1}(t)-x_{j}(t)-l_{p}$ as a HOCBF to guarantee $b_{j}(\boldsymbol{x}_{j}(t))\ge 0$ and constraint \eqref{eq:constraint-u}. To ensure there exists at least one solution to the  optimization Prob. \ref{prob:ACC-prob}, we define a continuous function $b_{j,F}(\boldsymbol{x}_{j}(t))\ge 0$ to guarantee feasibility, and then formulate all constraints mentioned above into QPs to get the optimal controller. The parameters are $v_{1}(0)=13.89m/s,v_{2}(0)=8 m/s, v_{3}(0)=14m/s, v_{2,d}=24m/s, v_{3,d}=25m/s, M_{1}=1500kg, M_{2}=1650kg, M_{3}=1550kg, g=9.81m/s^{2}, x_{1}(0)=0m, x_{2}(0)=-100m, x_{3}(0)=-190m, l_{p}=10m, f_{0}=0.1N, f_{1}=5Ns/m, f_{2}=0.25Ns^{2}/m, c_{2,a}=0.4, c_{3,a}=0.35.$
\subsection{Implementation with Auxiliary-Function-Based CBFs}

Let $b_{j}(\boldsymbol{x}_{j}(t))=x_{j-1}(t)-x_{j}(t)-l_{p}$. The relative degree of $b_{j}(\boldsymbol{x}_{j}(t))$ with respect to dynamics \eqref{eq:ACC-dynamics} is 2. The HOCBFs are then defined as 
\begin{equation}
\label{eq:HOCBF-sequence-ACC}
\begin{split}
&\psi_{j,0}(\boldsymbol{x}_{j})\coloneqq b_{j}(\boldsymbol{x}_{j}),\\
&\psi_{j,1}(\boldsymbol{x}_{j})\coloneqq L_{f}b_{j}(\boldsymbol{x}_{j})+k_{j,1}b_{j}(\boldsymbol{x}_{j}),\\
&\psi_{j,2}(\boldsymbol{x}_{j},\boldsymbol{u}_{j})\coloneqq L_{f}^{2}b_{j}(\boldsymbol{x}_{j})+L_{f}L_{g}b_{j}(\boldsymbol{x}_{j})u_{j}+\\
&k_{j,1}L_{f}b_{j}(\boldsymbol{x}_{j})+k_{j,2}\psi_{j,1}(\boldsymbol{x}_{j}),
\end{split}
\end{equation}
where $\alpha_{j,1}(\cdot),\alpha_{j,2}(\cdot)$ are set as linear functions. We define $\varphi_{j,0}(\boldsymbol{x}_{j},\boldsymbol{a}_{j})\coloneqq \mathcal{A}_{j}(a_{j})b_{j,F}(\boldsymbol{x}_{j})> 0$ as the modified feasibility constraint from \eqref{eq:Feasibility Constraint}, $\psi_{j,F}(\boldsymbol{x}_{j},\boldsymbol{u}_{j},\boldsymbol{a}_{j},\dot{\boldsymbol{a}}_{j})\ge \epsilon_{j}$ from \eqref{eq:modified CBF constraint} and $\mathcal{A}_{j}(a_{j})=e^{a_{j}}.$ The auxiliary-function-based CBFs are defined as
\begin{small}
\begin{equation}
\label{eq:CBF-sequence-ACC}
\begin{split}
&\varphi_{j,0}(\boldsymbol{x}_{j},\boldsymbol{a}_{j})\coloneqq e^{a_{j}}(L_{f}^{2}b_{j}(\boldsymbol{x}_{j})+\\
&[L_{f}L_{g}b_{j}(\boldsymbol{x}_{j})u_{j}]_{max}+k_{j,1}L_{f}b_{j}(\boldsymbol{x}_{j})+k_{j,2}\psi_{j,1}(\boldsymbol{x}_{j})),\\
&\psi_{j,F}(\boldsymbol{x}_{j},\boldsymbol{u}_{j},\boldsymbol{a}_{j},\dot{\boldsymbol{a}}_{j})\coloneqq \dot{\varphi}_{j,0}(\boldsymbol{x}_{j},\boldsymbol{u}_{j},\boldsymbol{a}_{j},\dot{\boldsymbol{a}}_{j})+l_{j,F}\varphi_{j,0}(\boldsymbol{x}_{j},\boldsymbol{a}_{j}),\\
\end{split}
\end{equation}
\end{small}
where $\dot{a}_{j}=-\frac{(L_{f}b_{j,F}(\boldsymbol{x}_{j})+\lambda_{j})}{b_{j,F}(\boldsymbol{x}_{j})}, u_{jM}=-c_{j,d}M_{j}g, \lambda_{j}=e^{a_{j}}L_{g}b_{j,F}(\boldsymbol{x}_{j})u_{jM}.$ $\alpha_{j,F}(\cdot)$ is defined as a linear function. The derivative of the resistance force for two vehicles makes the equation of $\psi_{j,F}$ complicated and calls for the introduction of auxiliary adaptive $\dot{a}_{j}$. By formulating the constraints from HOCBFs \eqref{eq:HOCBF-sequence-ACC}, auxiliary-function-based CBFs \eqref{eq:CBF-sequence-ACC}, CLF \eqref{eq:ACC-clf} and acceleration \eqref{eq:constraint-u}, we can define the cost function for the QP as
 \begin{small}
\begin{equation}
\label{eq:Auxiliary-equation based CBF cost}
\begin{split}
\min_{u_{j}(t),\delta_{j}(t)} \int_{0}^{T}[(\frac{u_{j}(t)-F_{r}(v_{j}(t))}{M_{j}})^{2}+p_{j}\delta_{j}(t)^{2}]dt.
\end{split}
\end{equation}
\end{small}
The remaining parameters are set as $a_{1}(0)=a_{2}(0)=1, c_{2,3}=c_{3,3}=1, p_{2}=p_{3}=1000,\epsilon_{2}=\epsilon_{3}=10^{-10}.$

\subsection{Implementation with HOCBFs without Feasibility Constraint}

As a benchmark, we consider the ``traditional" optimization problem without the feasibility constraint, and formulated without the auxiliary-function-based CBFs \eqref{eq:CBF-sequence-ACC}. In other words, the cost function is   \eqref{eq:Auxiliary-equation based CBF cost} and the constraints come from HOCBFs \eqref{eq:HOCBF-sequence-ACC}, CLF \eqref{eq:ACC-clf}, and acceleration \eqref{eq:constraint-u}.  All the corresponding parameters are set to the same values as above. 

\subsection{Simulation Results}

In this subsection, we show how our proposed auxiliary-function-based CBF method guarantees feasibility and safety and outperforms the benchmark described above, which does not use the feasibility constraint. 

We consider Prob. \ref{prob:ACC-prob} with different control bounds \eqref{eq:constraint-u}, and implement HOCBFs as safety constraints with or without feasibility constraint for solving Prob. \ref{prob:ACC-prob} in MATLAB. We use ode45 to integrate the dynamics for every $0.1s$ time-interval and quadprog to solve the QPs. The proposed method shows varying degrees of adaptivity to different lower control bounds in terms of feasibility, safety and optimality.

\begin{figure*}[t]
    \vspace{3mm}
    \centering
    \begin{subfigure}[t]{0.32\linewidth}
        \centering
        \includegraphics[width=1\linewidth]{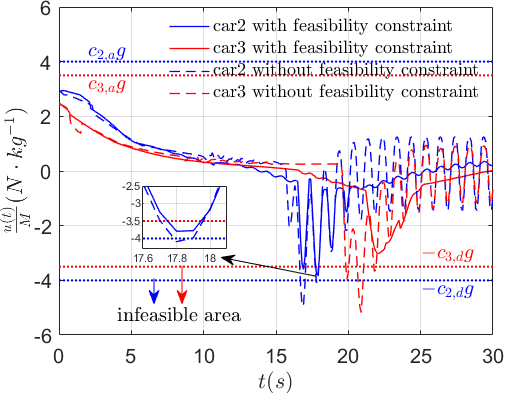}
        \caption{Acceleration of the $2^{nd},3^{rd}$ vehicles vs. time.}
        \label{fig:control input 1}
    \end{subfigure}
    \begin{subfigure}[t]{0.32\linewidth}
        \centering
        \includegraphics[width=1\linewidth]{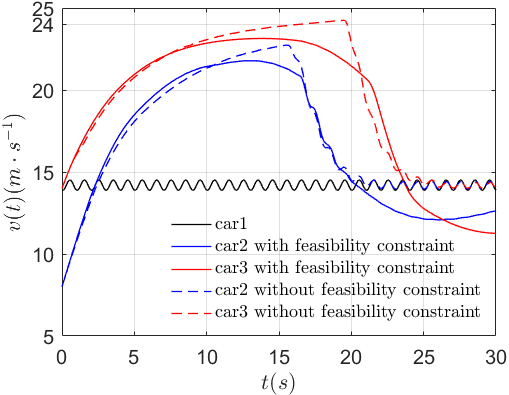}
        \caption{Velocity of the $1^{st},2^{nd},3^{rd}$ vehicles vs. time.}
        \label{fig:velocity 1}
    \end{subfigure}  
    \begin{subfigure}[t]{0.32\linewidth}
        \centering
        \includegraphics[width=1\linewidth]{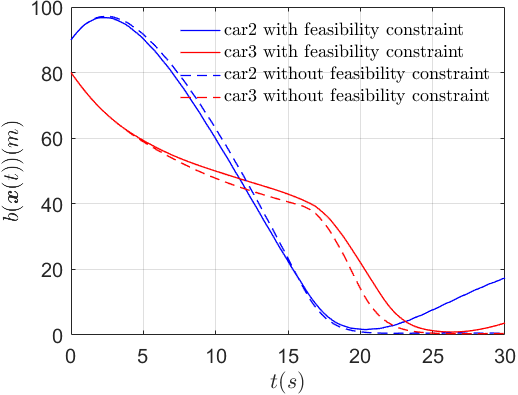}
       \caption{$b(\boldsymbol{x})$ of the $2^{nd},3^{rd}$ vehicles vs. time.}
        \label{fig:distance 1}
    \end{subfigure}
    \caption{Case 1-Feasibility constraint enhances feasibility for solving Prob. \ref{prob:ACC-prob}. The hyperparameters are set as $k_{2,1}=k_{2,2}=k_{3,1}=k_{3,2}=1,l_{2,F}=l_{3,F}=0.1.$ The lower control bounds are $c_{2,d}=0.4,c_{3,d}=0.35.$ Note that without feasibility constraint, the deceleration of the $2^{nd}$(blue), $3^{rd}$(red) vehicles exceeds deceleration bound, therefore the control bounds are violated (as shown by dashed lines in (a)), which causes infeasibility even though the safety is satisfied as shown in (c).} 
    \label{fig:feasibility enhanced}
\end{figure*}

\begin{figure*}[t]
    \centering
    \begin{subfigure}[t]{0.32\linewidth}
        \centering
        \includegraphics[width=1\linewidth]{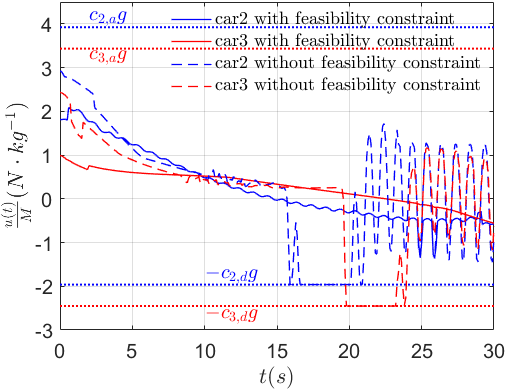}
        \caption{Acceleration of the $2^{nd},3^{rd}$ vehicles vs. time.}
        \label{fig:control input 2}
    \end{subfigure}
    \begin{subfigure}[t]{0.32\linewidth}
        \centering
        \includegraphics[width=1\linewidth]{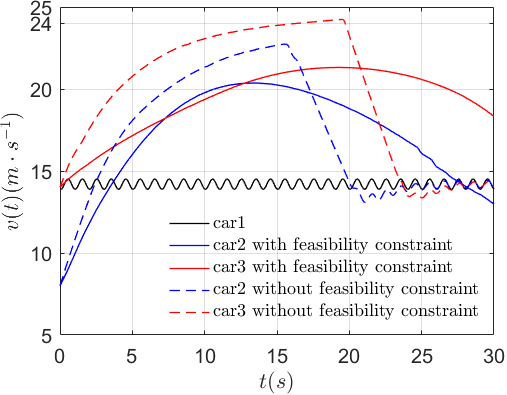}
        \caption{Velocity of the $1^{st},2^{nd},3^{rd}$ vehicles vs. time.}
        \label{fig:velocity 2}
    \end{subfigure}  
    \begin{subfigure}[t]{0.32\linewidth}
        \centering
        \includegraphics[width=1\linewidth]{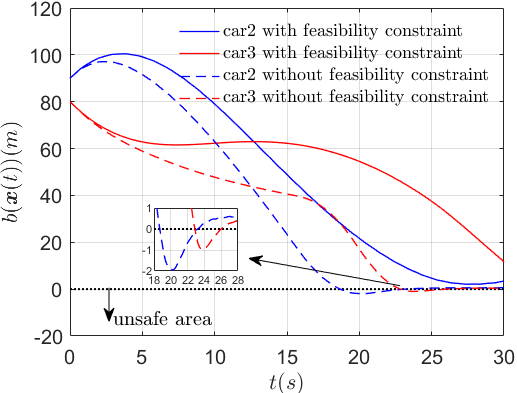}
        \caption{$b(\boldsymbol{x})$ of the $2^{nd},3^{rd}$ vehicles vs. time.}
        \label{fig:distance 2}
    \end{subfigure}
    \caption{Case 2-Feasibility constraint enhances safety for solving Prob. \ref{prob:ACC-prob}. The hyperparameters are set as $k_{2,1}=k_{2,2}=k_{3,1}=k_{3,2}=1,l_{2,F}=l_{3,F}=0.05.$ The lower control bounds are $c_{2,d}=0.2,c_{3,d}=0.25$ (tighter than Fig. \ref{fig:feasibility enhanced}). Note that without feasibility constraint, the $b(\boldsymbol{x})$ of the $2^{nd}$(blue), $3^{rd}$(red) vehicles exceeds safety bound, therefore the safe distance between vehicles can be negative (as shown by dashed lines in (c)), which causes danger even though the control bounds are satisfied as shown in (a).} 
    \label{fig:safety enhanced}
\end{figure*}
We compare two CBFs-based methods in terms of the feasibility of the corresponding QPs. The only difference between the two methods is that one method additionally uses our proposed auxiliary-function-based CBFs to enforce feasibility constraint, while another one is without feasibility constraint. In Fig. \ref{fig:feasibility enhanced}, we keep hyperparameters of safety related HOCBFs the same for two methods as $k_{2,1}=k_{2,2}=k_{3,1}=k_{3,2}=1.$ Two extra hyperparameters are set as $l_{2,F}=l_{3,F}=0.1$ for the feasibility-constraint-related CBFs. The lower control bounds for two vehicles are: $-c_{2,d}M_{2}g, -c_{3,d}M_{3}g$ where $c_{2,d}=0.4,c_{3,d}=0.35.$ In Fig. \ref{fig:control input 1}, it shows that solving QPs for the second and third vehicles is always feasible (denoted by solid lines) with the feasibility constraint since the acceleration is always within bounds
while the QPs will become infeasible (denoted by dashed lines, starting from $t=16.7s, 17.8s, 19.9s, 20.7s, 21.8s.$) without this constraint since the deceleration of two vehicles exceeds deceleration bound. The reason of the effectiveness of the feasibility constraint can be found in Fig. \ref{fig:velocity 1}. With feasibility constraint, the vehicle tends to brake earlier, therefore reaches a smaller peak velocity to avoid a delayed steep deceleration. Since $b(\boldsymbol{x})\ge 0,$ the safe distance $l_{p}$ is always maintained for two vehicles $\forall t \in [0, 30s],$ which can be seen in Fig. \ref{fig:distance 1} and shows the safety is always guaranteed for two methods. 

To test the adaptivity to tighter deceleration bound for the two methods,  we set the lower
control bounds $-c_{2,d}M_{2}g, -c_{3,d}M_{3}g$ for the two vehicles with $c_{2,d}=0.2,c_{3,d}=0.25$. In this case, we compare two CBFs-based methods in terms of safety, without caring about the feasibility. The only difference between the two methods is that one method additionally uses our proposed auxiliary-function-based CBFs to enforce feasibility constraint, while the one without feasibility constraint makes the vehicles brake at the maximum deceleration when the vehicles' decelerations are about to exceed their bounds. In Fig. \ref{fig:safety enhanced}, we keep hyperparameters of safety related HOCBFs the same for the two methods as $k_{2,1}=k_{2,2}=k_{3,1}=k_{3,2}=1.$ Two extra hyperparameters are set as $l_{2,F}=l_{3,F}=0.05$ for the feasibility-constraint-related CBFs. Even though both methods satisfy the acceleration constraint as shown in Fig. \ref{fig:control input 2}, our method effectively maintains a safe distance between the two vehicles, shown by solid lines. Without the feasibility constraint, the vehicles fail to keep the safe distance $l_{p}$ since $b(\boldsymbol{x}) <0$ (denoted by dashed lines from $t=18.7s,22.9s$), as shown in Fig. \ref{fig:distance 2}. Fig. \ref{fig:velocity 2} explains that the feasibility constraint enables earlier braking, allowing the vehicles to keep a longer distance from the corresponding lead vehicle.

We also notice that due to control strategy \eqref{eq:first vehicle controller} used for the first vehicle, the velocity curves in Fig. \ref{fig:velocity 1} and \ref{fig:velocity 2} vibrate frequently (denoted by solid black curve), which might cause the sharp transition of control in the middle shown by dashed curves in Fig. \ref{fig:control input 1} and \ref{fig:control input 2}. Compared to this, our proposed method can generate a smoother optimal controller denoted by solid curves, which might make contribution to reducing more energy cost (increasing optimality).

\section{Conclusion and Future Work}
\label{sec:Conclusion and Future Work}
We propose auxiliary-function-based CBFs as sufficient constraints for safety and feasibility guarantees of constrained optimal control problems, which work for general affine control systems. We have demonstrated the effectiveness of our proposed method in this paper by applying it to an adaptive cruise control problem for a heterogeneous platoon. There are still some scenarios the current method can not perfectly handle, i.e., many other hard constraints are added to optimization problems due to requirements beyond safety, which may lead to conflicts between various constraints. We will address this limitation in future work by creating a more general CBFs-based method with less conservative conditions for constrained optimal control problems.
\bibliographystyle{IEEEtran}
\balance
\bibliography{references.bib}

\begin{thebibliography}{10}
\providecommand{\url}[1]{#1}
\csname url@samestyle\endcsname
\providecommand{\newblock}{\relax}
\providecommand{\bibinfo}[2]{#2}
\providecommand{\BIBentrySTDinterwordspacing}{\spaceskip=0pt\relax}
\providecommand{\BIBentryALTinterwordstretchfactor}{4}
\providecommand{\BIBentryALTinterwordspacing}{\spaceskip=\fontdimen2\font plus
\BIBentryALTinterwordstretchfactor\fontdimen3\font minus
  \fontdimen4\font\relax}
\providecommand{\BIBforeignlanguage}[2]{{%
\expandafter\ifx\csname l@#1\endcsname\relax
\typeout{** WARNING: IEEEtran.bst: No hyphenation pattern has been}%
\typeout{** loaded for the language `#1'. Using the pattern for}%
\typeout{** the default language instead.}%
\else
\language=\csname l@#1\endcsname
\fi
#2}}
\providecommand{\BIBdecl}{\relax}
\BIBdecl

\bibitem{tee2009barrier}
K.~P. Tee, S.~S. Ge, and E.~H. Tay, ``Barrier lyapunov functions for the
  control of output-constrained nonlinear systems,'' \emph{Automatica},
  vol.~45, no.~4, pp. 918--927, 2009.

\bibitem{boyd2004convex}
S.~Boyd, S.~P. Boyd, and L.~Vandenberghe, \emph{Convex optimization}.\hskip 1em
  plus 0.5em minus 0.4em\relax Cambridge university press, 2004.

\bibitem{aubin2011viability}
J.-P. Aubin, A.~M. Bayen, and P.~Saint-Pierre, \emph{Viability theory: new
  directions}.\hskip 1em plus 0.5em minus 0.4em\relax Springer Science \&
  Business Media, 2011.

\bibitem{prajna2007framework}
S.~Prajna, A.~Jadbabaie, and G.~J. Pappas, ``A framework for worst-case and
  stochastic safety verification using barrier certificates,'' \emph{IEEE
  Transactions on Automatic Control}, vol.~52, no.~8, pp. 1415--1428, 2007.

\bibitem{panagou2013multi}
D.~Panagou, D.~M. Stipanovi{\v{c}}, and P.~G. Voulgaris, ``Multi-objective
  control for multi-agent systems using lyapunov-like barrier functions,'' in
  \emph{52nd IEEE Conference on Decision and Control}, 2013, pp. 1478--1483.

\bibitem{wang2016multi}
L.~Wang, A.~D. Ames, and M.~Egerstedt, ``Multi-objective compositions for
  collision-free connectivity maintenance in teams of mobile robots,'' in
  \emph{2016 IEEE 55th Conference on Decision and Control (CDC)}, 2016, pp.
  2659--2664.

\bibitem{glotfelter2017nonsmooth}
P.~Glotfelter, J.~Cort{\'e}s, and M.~Egerstedt, ``Nonsmooth barrier functions
  with applications to multi-robot systems,'' \emph{IEEE control systems
  letters}, vol.~1, no.~2, pp. 310--315, 2017.

\bibitem{ames2016control}
A.~D. Ames, X.~Xu, J.~W. Grizzle, and P.~Tabuada, ``Control barrier function
  based quadratic programs for safety critical systems,'' \emph{IEEE
  Transactions on Automatic Control}, vol.~62, no.~8, pp. 3861--3876, 2016.

\bibitem{ames2012control}
A.~D. Ames, K.~Galloway, and J.~W. Grizzle, ``Control lyapunov functions and
  hybrid zero dynamics,'' in \emph{2012 IEEE 51st IEEE Conference on Decision
  and Control (CDC)}, 2012, pp. 6837--6842.

\bibitem{nguyen2016exponential}
Q.~Nguyen and K.~Sreenath, ``Exponential control barrier functions for
  enforcing high relative-degree safety-critical constraints,'' in \emph{2016
  American Control Conference (ACC)}, 2016, pp. 322--328.

\bibitem{xiao2021high}
W.~Xiao and C.~Belta, ``High-order control barrier functions,'' \emph{IEEE
  Transactions on Automatic Control}, vol.~67, no.~7, pp. 3655--3662, 2021.

\bibitem{isaly2020zeroing}
A.~Isaly, B.~C. Allen, R.~G. Sanfelice, and W.~E. Dixon, ``Zeroing control
  barrier functions for safe volitional pedaling in a motorized cycle,''
  \emph{IFAC-PapersOnLine}, vol.~53, no.~5, pp. 218--223, 2020.

\bibitem{khazoom2022humanoid}
C.~Khazoom, D.~Gonzalez-Diaz, Y.~Ding, and S.~Kim, ``Humanoid self-collision
  avoidance using whole-body control with control barrier functions,'' in
  \emph{2022 IEEE-RAS 21st International Conference on Humanoid Robots
  (Humanoids)}, 2022, pp. 558--565.

\bibitem{cavorsi2022multi}
M.~Cavorsi, B.~Capelli, L.~Sabattini, and S.~Gil, ``Multi-robot adversarial
  resilience using control barrier functions,'' in \emph{Robotics: Science and
  Systems}, 2022.

\bibitem{zeng2021safety2}
J.~Zeng, B.~Zhang, and K.~Sreenath, ``Safety-critical model predictive control
  with discrete-time control barrier function,'' in \emph{2021 American Control
  Conference (ACC)}, 2021, pp. 3882--3889.

\bibitem{liu2023iterative}
S.~Liu, J.~Zeng, K.~Sreenath, and C.~A. Belta, ``Iterative convex optimization
  for model predictive control with discrete-time high-order control barrier
  functions,'' in \emph{2023 American Control Conference (ACC)}, 2023, pp.
  3368--3375.

\bibitem{dawson2022safe}
C.~Dawson, Z.~Qin, S.~Gao, and C.~Fan, ``Safe nonlinear control using robust
  neural lyapunov-barrier functions,'' in \emph{Conference on Robot
  Learning}.\hskip 1em plus 0.5em minus 0.4em\relax PMLR, 2022, pp. 1724--1735.

\bibitem{du2023reinforcement}
D.~Du, S.~Han, N.~Qi, H.~B. Ammar, J.~Wang, and W.~Pan, ``Reinforcement
  learning for safe robot control using control lyapunov barrier functions,''
  in \emph{2023 IEEE International Conference on Robotics and Automation
  (ICRA)}, 2023, pp. 9442--9448.

\bibitem{gurriet2018online}
T.~Gurriet, M.~Mote, A.~D. Ames, and E.~Feron, ``An online approach to active
  set invariance,'' in \emph{2018 IEEE Conference on Decision and Control
  (CDC)}, 2018, pp. 3592--3599.

\bibitem{singletary2019online}
A.~Singletary, P.~Nilsson, T.~Gurriet, and A.~D. Ames, ``Online active safety
  for robotic manipulators,'' in \emph{2019 IEEE/RSJ International Conference
  on Intelligent Robots and Systems (IROS)}, 2019, pp. 173--178.

\bibitem{gurriet2020scalable}
T.~Gurriet, M.~Mote, A.~Singletary, P.~Nilsson, E.~Feron, and A.~D. Ames, ``A
  scalable safety critical control framework for nonlinear systems,''
  \emph{IEEE Access}, vol.~8, pp. 187\,249--187\,275, 2020.

\bibitem{chen2021backup}
Y.~Chen, M.~Jankovic, M.~Santillo, and A.~D. Ames, ``Backup control barrier
  functions: Formulation and comparative study,'' in \emph{2021 60th IEEE
  Conference on Decision and Control (CDC)}, 2021, pp. 6835--6841.

\bibitem{squires2018constructive}
E.~Squires, P.~Pierpaoli, and M.~Egerstedt, ``Constructive barrier certificates
  with applications to fixed-wing aircraft collision avoidance,'' in \emph{2018
  IEEE Conference on Control Technology and Applications (CCTA)}, 2018, pp.
  1656--1661.

\bibitem{breeden2021high}
J.~Breeden and D.~Panagou, ``High relative degree control barrier functions
  under input constraints,'' in \emph{2021 60th IEEE Conference on Decision and
  Control (CDC)}, 2021, pp. 6119--6124.

\bibitem{xiao2021adaptive}
W.~Xiao, C.~Belta, and C.~G. Cassandras, ``Adaptive control barrier
  functions,'' \emph{IEEE Transactions on Automatic Control}, vol.~67, no.~5,
  pp. 2267--2281, 2021.

\bibitem{liu2023auxiliary}
S.~Liu, W.~Xiao, and C.~A. Belta, ``Auxiliary-variable adaptive control barrier
  functions for safety critical systems,'' in \emph{2023 62nd IEEE Conference
  on Decision and Control (CDC)}, 2023, pp. 8602--8607.

\bibitem{xiao2022sufficient}
W.~Xiao, C.~A. Belta, and C.~G. Cassandras, ``Sufficient conditions for
  feasibility of optimal control problems using control barrier functions,''
  \emph{Automatica}, vol. 135, p. 109960, 2022.

\bibitem{Khalil:1173048}
\BIBentryALTinterwordspacing
H.~K. Khalil, \emph{Nonlinear systems; 3rd ed.}\hskip 1em plus 0.5em minus
  0.4em\relax Upper Saddle River, NJ: Prentice-Hall, 2002, the book can be
  consulted by contacting: PH-AID: Wallet, Lionel. [Online]. Available:
  \url{https://cds.cern.ch/record/1173048}
\BIBentrySTDinterwordspacing

\bibitem{xu2022feasibility}
K.~Xu, W.~Xiao, and C.~G. Cassandras, ``Feasibility guaranteed traffic merging
  control using control barrier functions,'' in \emph{2022 American Control
  Conference (ACC)}.\hskip 1em plus 0.5em minus 0.4em\relax IEEE, 2022, pp.
  2309--2314.

\bibitem{xiao2022event}
W.~Xiao, C.~Belta, and C.~G. Cassandras, ``Event-triggered control for
  safety-critical systems with unknown dynamics,'' \emph{IEEE Transactions on
  Automatic Control}, 2022.

\bibitem{zeng2021safety}
J.~Zeng, B.~Zhang, Z.~Li, and K.~Sreenath, ``Safety-critical control using
  optimal-decay control barrier function with guaranteed point-wise
  feasibility,'' in \emph{2021 American Control Conference (ACC)}, 2021, pp.
  3856--3863.

\end{thebibliography}
\end{document}